\documentclass[11pt,reqno,a4paper]{amsart}
\usepackage{amssymb}
\usepackage{amsfonts}
\usepackage{amsmath, amsthm, amssymb}
\usepackage[english]{babel}
\usepackage[mathcal]{eucal}
\usepackage{xcolor}
\usepackage{float}

\title[]{On the asymptotic normality and the construction of confidence intervals for estimators after sampling with probabilistic and deterministic stopping rules}
\author{Ben Berckmoes and Geert Molenberghs}
\date{}
\keywords{asymptotic normality, confidence interval, Kolmogorov distance, random sample size, rate of convergence, sequential clinical trial, stopping rule, total variation distance}
\thanks{Ben Berckmoes is post doctoral fellow at the Fund for Scientific Research of Flanders (FWO); financial support from the IAP research network \#P7/06 of the Belgian Government (Belgian Science Policy) is gratefully acknowledged.}

\begin{document}

\maketitle

\hyphenation{ex-ten-da-ble}

\newtheorem{pro}{Proposition}
\newtheorem{lem}{Lemma}
\newtheorem{thm}[pro]{Theorem}
\newtheorem{de}[pro]{Definition}
\newtheorem{co}[pro]{Comment}
\newtheorem{no}[pro]{Notation}
\newtheorem{vb}[pro]{Example}
\newtheorem{vbn}[pro]{Examples}
\newtheorem{gev}[pro]{Corollary}

\begin{abstract}
A key feature of a sequential study is that the actual sample size is a random variable that typically depends on the outcomes collected. While hypothesis testing theory for sequential designs is well established, parameter and precision estimation is less well understood. Even though earlier work has established a number of {\em ad hoc\/} estimators to overcome alleged bias in the ordinary sample average, recent work has shown the sample average to be consistent. Building upon these results, by providing a rate of convergence for the total variation distance, it is established that the asympotic distribution of the sample average is normal, in almost all cases, except in a very specific one where the stopping rule is deterministic and the true population mean coincides with the cut-off between stopping and continuing. For this pathological case, the Kolmogorov distance with the normal is found to equal 0.125. While noticeable in the asymptotic distribution, simulations show that there fortunately are no consequences for the coverage of normally-based confidence intervals.
\end{abstract}

\section{Introduction}
In surprisingly many settings, sample sizes are random. These include sequential trials, clusters of random size, incomplete data, etc. \cite{MKA14} and \cite{MMA16} studied implications of this on estimators in a unified framework; \cite{MMA15} focused on the specific but important case of a sequential trial, which is also the setting of interest in this paper.

While formal sequential methodology dates back to World War II (\cite{W45}), most emphasis has been placed on design and hypothesis testing. Regarding parameter estimation after sequential trials, it has been reported that commonly used estimators, such as the sample average, exhibit bias at least in some settings. In response, a variety of alternative estimators have been proposed (\cite{S78,HP88,EF90}). Building upon \cite{LH99} and \cite{LHW06},  \cite{MKA14}, \cite{MMA16}, and \cite{MMA15} reviewed several of these and actually showed that the sample average is a consistent estimator in spite of earlier concern, even though there is a small amount of finite-sample bias. Their approach is based on considering a class of stochastic stopping rules that lead to the more commonly encountered deterministic stopping rules as limiting cases. They used incomplete-data ignorable likelihood theory to this end. In addition, they showed that there exists an alternative, conditional likelihood estimator that conditions on the sample size realized; this one is unbiased also in small samples but is slightly less efficient than the sample average, and is implicitly defined through an estimating equation.  

While these earlier results are important, the authors did not address the limiting distribution of the mean estimated from a sequential trial and its implications for confidence interval estimation. This is the focus of the current paper. To this end, we consider the manageable but generic case where in a first step $n$ i.i.d.\ normally distributed $N(\mu,1)$ observations are collected, after which a stopping rule is applied and, depending on the outcome, a second i.i.d.\ set of $n$ observations is or is not collected. The probability of stopping after the first round is assumed to be of the form $\Phi\left(\alpha+\frac{\beta}{n}K_n\right)$, with $\Phi(\cdot)$ the probit function, $K_n$ the sample sum of the first $n$ observations, and $\alpha$ and $\beta$ {\em a priori\/} fixed parameters. The setting is formalized in the next section. While there are many cases where other than normal data are collected, it is a sufficiently common and at the same time tractable case; extension to the exponential family is of interest but outside of the scope of this paper. Also for ease of exposition, we consider a study with two possible sample sizes, $n$ and $2n$. Also this can be generalized in a straightforward fashion. Finally, depending on the situation, $K_n/n$ may or may not be the core of the test statistic considered, even though the ratio of the sample sum over a measure of information is very commonly encountered. Calculations for alternative functions of $K_n$ will follow logic similar to the one displayed here. 

Employing the total variation distance, we establish that for stochastic stopping rules asymptotic normality applies. Likewise, we show that this is true too for deterministic stopping rules, provided that $\mu\ne 0$. For these cases rates of convergence are established. When $\mu=0$ there is no weak convergence; we establish the Kolmogorov distance between the true distribution and the normal.

In Section~\ref{framework}, the formal framework is introduced. In Section~\ref{Maintheorem}, the main result is formulated. The behavior in practice is gauged by way of a simulation study, described in Section~\ref{simulations}, with some details relegated to the Appendix. Implications and ramifications are discussed in Section~\ref{sec:Discussion}.

\section{Formal framework\label{framework}}

Let $X_1, X_2, \ldots, X_n, \ldots$ be independent and identically distributed random variables with law $N(\mu,1)$. Also, let $N_1, N_2, \ldots, N_n, \ldots$ be random sample sizes such that each $N_n$ takes the values $n$ or $2n$, is independent of $X_{n+1}, X_{n+2}, \ldots$, and satisfies the conditional law 
\begin{equation}
\mathbb{P}\left[N_n = n \mid X_1, \ldots, X_n\right] = \Phi\left(\alpha + \frac{\beta}{n} K_n\right),\label{eq:PSRule}
\end{equation}
where $\Phi$ is the standard normal cumulative distribution function, $$K_n = \sum_{i=1}^n X_i,$$ $\alpha \in \mathbb{R}$, and $\beta \in \mathbb{R}^+$. Notice that the restriction that $\beta$ be positive is merely for convenience, and that the results presented in this paper can be easily extended for negative $\beta$. We also consider the limiting case of (\ref{eq:PSRule}) where $\beta \rightarrow \infty$, which corresponds to 
\begin{equation}
\mathbb{P}\left[N_n = n \mid X_1, \ldots, X_n\right] = 1_{\left\{K_n > 0\right\}},\label{eq:PSRuleBInfty}
\end{equation}
where $1_{\left\{K_n > 0\right\}}$ stands for the characteristic function of the set $\left\{K_n > 0\right\}$. Finally, we define the estimator
\begin{equation}
\widehat{\mu}_{N_n} = \frac{1}{N_n} K_{N_n},\label{eq:est}
\end{equation}
which is the classical average of a sample with random size $N_n$.

In \cite{MKA14}, it is shown that $\widehat{\mu}_{N_n}$, defined by (\ref{eq:est}), is, for both the stopping rules (\ref{eq:PSRule}) and (\ref{eq:PSRuleBInfty}), a legitimate estimator for $\mu$ in the sense that it is asymptotically unbiased. More precisely, it is established there that, for the probabilistic stopping rule (\ref{eq:PSRule}), 
\begin{equation}
 \mathbb{E}[\widehat{\mu}_{N_n}] = \mu + \frac{1}{2n} \frac{\beta}{\sqrt{1 + \beta^2/n}} \phi\left(\frac{\alpha + \beta \mu}{\sqrt{1 + \beta^2/n}}\right),\label{eq:ExpMu}
\end{equation}
and, for the deterministic stopping rule (\ref{eq:PSRuleBInfty}), 
 \begin{equation}
 \mathbb{E}[\widehat{\mu}_{N_n}] = \mu + \frac{1}{2 \sqrt{n}} \phi(\sqrt{n} \mu),\label{eq:ExpMuBInfty}
 \end{equation}
where $\phi$ is the standard normal density. Clearly, (\ref{eq:ExpMu}) and (\ref{eq:ExpMuBInfty}) both converge to $\mu$ as $n$ tends to $\infty$. These authors also consider small sample bias corrected estimators, but this is outside of the scope of this paper.

In this note, we consider a different aspect of the legitimacy of the estimator $\widehat{\mu}_{N_n}$. More precisely, we examine the asymptotic normality of the sequence 
\begin{equation}
\left(\sqrt{N_n}\left(\widehat{\mu}_{N_n} - \mu\right)\right)_n.\label{eq:BasicSeq}
\end{equation}

\section{Statement of the main result\label{Maintheorem}}

Recall that the Kolmogorov distance between random variables $\xi$ and $\eta$ is given by
$$K(\xi,\eta) = \sup_{x \in \mathbb{R}} \left|\mathbb{P}[\xi \leq x] - \mathbb{P}[\eta \leq x]\right|,$$
and the total variation distance by
$$d_{TV}(\xi,\eta) = \sup_{A} \left|\mathbb{P}[\xi \in A] - \mathbb{P}[\eta \in A]\right|,$$
the supremum running over all Borel sets $A \subset \mathbb{R}$. Clearly, the inequality 
$$K \leq d_{TV}$$
holds, and it is known to be strict in general. Also, it is well known that a sequence of random variables $(\xi_n)_n$ converges weakly to a continuously distributed random variable $\xi$ if and only if $K(\xi,\xi_n) \rightarrow 0$. Finally, $d_{TV}$ metrizes a type of convergence which is in general strictly stronger than weak convergence. For more information on these distances, and on the theory of probability distances in general, we refer the reader to \cite{R91} and \cite{Z83}.

In the following theorem, our main result, we show that if the probabilistic stopping rule (\ref{eq:PSRule}) is followed, then the sequence $\left(\sqrt{N_n}\left(\widehat{\mu}_{N_n} - \mu\right)\right)_n$ converges in total variation distance to $\Phi$, and we establish a rate of convergence in this case. Furthermore, we prove that if the deterministic stopping rule (\ref{eq:PSRuleBInfty}) is followed and $\mu \neq 0$, then the sequence $\left(\sqrt{N_n}\left(\widehat{\mu}_{N_n} - \mu\right)\right)_n$ also converges in total variation distance to $\Phi$, and we again provide a rate of convergence in this case. Finally, we establish that if the deterministic stopping rule (\ref{eq:PSRuleBInfty}) is followed and $\mu = 0$, then, for each $n$, $K(\Phi,\sqrt{N_n}\left(\widehat{\mu}_{N_n} - \mu\right)) = 1/8$. In particular,  $\left(\sqrt{N_n}\left(\widehat{\mu}_{N_n} - \mu\right)\right)_n$ fails to converge weakly to $\Phi$ in this case. We nevertheless show that in all cases it is plausible to use estimation (\ref{eq:est}) for the construction of reliable confidence intervals for $\mu$.

A proof is given in Appendix A.

\begin{thm}\label{thm:CLTCT}
Suppose that the probabilistic stopping rule (\ref{eq:PSRule}) is followed. Then, for each $n$, 
\begin{equation}
d_{TV}(\Phi,\sqrt{N_n}(\widehat{\mu}_{N_n} -  \mu)) \leq C(\alpha,\beta,\mu,n),\label{eq:ROCTV}
\end{equation}
where
\begin{eqnarray*}
\lefteqn{C(\alpha,\beta,\mu,n) = \int_{-\infty}^\infty \phi(u)} \\
&&\left|\Phi\left(\sqrt{\frac{2n}{2n + \beta^2}} (\alpha + \beta \mu) + \frac{\beta}{\sqrt{2 n + \beta^2}} u\right) - \Phi\left(\alpha + \beta \mu + \frac{\beta}{\sqrt{n}} u\right)\right| du,
\end{eqnarray*}
which, by the Dominated Convergence Theorem, converges to $0$ as $n \rightarrow \infty$, whence $\left(\sqrt{N_n}\left(\widehat{\mu}_{N_n} - \mu\right)\right)_n$ converges in total variation distance to $\Phi$. In particular, considering the Borel set $A_x = [-x,x]$ for $x \geq 0$, (\ref{eq:ROCTV}) gives
$$\left|2 \Phi (x) - 1 - \mathbb{P}\left[\widehat{\mu}_{N_n} - \frac{1}{\sqrt{N_n}} x \leq \mu \leq \widehat{\mu}_{N_n} + \frac{1}{\sqrt{N_n}} x \right]\right| \leq C(\alpha,\beta,\mu,n),$$
which makes it plausible to use $\widehat{\mu}_{N_n}$ for the construction of reliable confidence intervals for $\mu$.

Now suppose that the deterministic stopping rule (\ref{eq:PSRuleBInfty}) is followed. Then, for each $n$,
\begin{equation}
d_{TV}(\Phi,\sqrt{N_n}(\widehat{\mu}_{N_n} -  \mu))  \leq C(\mu,n),\label{eq:ROCTVD}
\end{equation}
where
\begin{equation*}
C(\mu,n) =  \int_{-\infty}^\infty \phi(u) \left|1_{\left\{u > -\sqrt{n} \mu\right\}} - \Phi\left(u + \sqrt{2 n } \mu\right)\right| du,
\end{equation*}
which, if $\mu \neq 0$, by the Dominated Convergence Theorem, tends to $0$ as $n \rightarrow \infty$, whence $\left(\sqrt{N_n}\left(\widehat{\mu}_{N_n} - \mu\right)\right)_n$ converges in total variation distance to $\Phi$. In particular, considering the Borel set $A_x = [-x,x]$ for $x \geq 0$, (\ref{eq:ROCTVD}) gives
$$\left|2 \Phi (x) - 1 - \mathbb{P}\left[\widehat{\mu}_{N_n} - \frac{1}{\sqrt{N_n}} x \leq \mu \leq \widehat{\mu}_{N_n} + \frac{1}{\sqrt{N_n}} x \right]\right| \leq C(\mu,n),$$
which, if $\mu \neq 0$, makes it plausible to use $\widehat{\mu}_{N_n}$ for the construction of reliable confidence intervals for $\mu$.

If $\mu = 0$, then, for each $n$,
\begin{equation}
K(\Phi,\sqrt{N_n}\left(\widehat{\mu}_{N_n} - \mu\right)) = 1/8,\label{eq:KBad}
\end{equation}
and $\left(\sqrt{N_n}\left(\widehat{\mu}_{N_n} - \mu\right)\right)_n$ fails to converge weakly to $\Phi$. Nevertheless, for each $x \in \mathbb{R}^+_0$,
\begin{equation}
\mathbb{P}\left[\widehat{\mu}_{N_n} - \frac{1}{\sqrt{N_n}} x \leq \mu \leq \widehat{\mu}_{N_n} + \frac{1}{\sqrt{N_n}} x \right] = 2 \Phi(x) - 1.\label{eq:CImuzero}
\end{equation}
Thus, also in the case where $\mu = 0$, it is plausible to use $\widehat{\mu}_{N_n}$ for the construction of reliable confidence intervals for $\mu$.
\end{thm}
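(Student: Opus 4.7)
My plan is to work with the standardized sub-sums $Y_n = (K_n - n\mu)/\sqrt{n}$ and $Y_n' = (K_{2n} - K_n - n\mu)/\sqrt{n}$, which are independent $N(0,1)$, with $Y_n'$ independent of the pair $(Y_n, N_n)$. Writing $Z_n = \sqrt{N_n}(\widehat{\mu}_{N_n} - \mu)$, one has $Z_n = Y_n$ on $\{N_n = n\}$ and $Z_n = (Y_n + Y_n')/\sqrt{2}$ on $\{N_n = 2n\}$, while the conditional stopping probability becomes $\mathbb{P}[N_n = n \mid Y_n = u] = p_n(u)$, with $p_n(u) = \Phi(\alpha + \beta\mu + \beta u/\sqrt{n})$ under (\ref{eq:PSRule}) and $p_n(u) = 1_{\{u > -\sqrt{n}\mu\}}$ under (\ref{eq:PSRuleBInfty}). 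The main trick is the orthogonal change of variables $(Y_n, Y_n') \mapsto ((Y_n + Y_n')/\sqrt{2}, (Y_n - Y_n')/\sqrt{2})$, which leaves the joint density $\phi(y)\phi(y')$ invariant; applying it gives the clean closed form
\begin{equation*}
f_{Z_n}(z) = \phi(z)\Bigl[p_n(z) + 1 - \mathbb{E}_V[p_n((z+V)/\sqrt{2})]\Bigr],
\end{equation*}
where $V \sim N(0,1)$. Consequently, for every Borel set $A$,
\begin{equation*}
\mathbb{P}[Z_n \in A] - \Phi(A) = \int_A \phi(z)\bigl(p_n(z) - \mathbb{E}_V[p_n((z+V)/\sqrt{2})]\bigr)\,dz,
\end{equation*}
and taking the supremum over $A$ immediately yields the total variation bounds (\ref{eq:ROCTV}) and (\ref{eq:ROCTVD}).

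\textbf{Identifying $C$ and showing convergence.} In the probabilistic case I would invoke the standard identity $\mathbb{E}[\Phi(c + dV)] = \Phi(c/\sqrt{1+d^2})$ with $c = \alpha + \beta\mu + (\beta/\sqrt{n})u/\sqrt{2}$ and $d = (\beta/\sqrt{n})/\sqrt{2}$ to rewrite $\mathbb{E}_V[p_n((u+V)/\sqrt{2})]$ as the first $\Phi$-term appearing in $C(\alpha,\beta,\mu,n)$. In the deterministic case the analogous evaluation is immediate: $\mathbb{E}_V[1_{\{(u+V)/\sqrt{2} > -\sqrt{n}\mu\}}] = \Phi(u + \sqrt{2n}\mu)$, matching $C(\mu,n)$. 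In both cases the two expressions inside $|\cdot|$ converge pointwise to a common limit as $n \to \infty$ provided $\mu \neq 0$ in the deterministic case, and they are uniformly bounded by $1$, so the Dominated Convergence Theorem closes the argument. The confidence-interval coverage statements follow from the total variation bound by specializing $A = [-x, x]$.

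\textbf{The pathological case $\mu = 0$.} Here $p_n(z) = 1_{\{z > 0\}}$ and $\mathbb{E}_V[p_n((z+V)/\sqrt{2})] = \Phi(z)$, so the density $f_{Z_n}(z) = \phi(z)[1_{\{z > 0\}} + 1 - \Phi(z)]$ is actually independent of $n$. Using the antiderivative $\Phi(z)^2/2$ of $\phi(z)\Phi(z)$, I would obtain
\begin{equation*}
\mathbb{P}[Z_n \leq x] = \begin{cases} \Phi(x) - \Phi(x)^2/2, & x \leq 0, \\ 2\Phi(x) - 1/2 - \Phi(x)^2/2, & x \geq 0, \end{cases}
\end{equation*}
whence $|\mathbb{P}[Z_n \leq x] - \Phi(x)|$ equals $\Phi(x)^2/2$ for $x \leq 0$ and $(1 - \Phi(x))^2/2$ for $x \geq 0$; each is maximized at $x = 0$ with common value $1/8$, proving (\ref{eq:KBad}). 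Integrating the density over $[-x, x]$, the asymmetric piece $\phi(z)\,1_{\{0 < z \leq x\}}$ contributes exactly $\Phi(x) - 1/2$ and combines with the symmetric part to produce $2\Phi(x) - 1$, which is (\ref{eq:CImuzero}). The only genuinely non-routine step of the whole argument is the orthogonal rotation producing the closed-form density $f_{Z_n}$; once it is in place, the rest reduces either to a standard normal-integral identity or to one-variable calculus.
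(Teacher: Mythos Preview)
Your proof is correct and rests on the same two ingredients as the paper's: the orthogonal rotation of the two independent standard-normal blocks (this is exactly the paper's Lemma~2, stated there as the density identity $\phi((z-n\mu)/\sqrt{n})\phi((k-z-n\mu)/\sqrt{n})=\phi((k-2n\mu)/\sqrt{2n})\phi((2z-k)/\sqrt{2n})$) and the identity $\mathbb{E}[\Phi(c+dV)]=\Phi(c/\sqrt{1+d^2})$ (the paper's Lemma~1). The only difference is organizational: the paper first establishes the joint density $f_{N_n,K_{N_n}}$ in a separate lemma and then changes variables $u=(k-n\mu)/\sqrt{n}$ and $u=(k-2n\mu)/\sqrt{2n}$, whereas you standardize to $Y_n,Y_n'$ at the outset and obtain $f_{Z_n}$ in one stroke---a more economical packaging of the same argument, with the $\mu=0$ computation and the symmetric-interval identity carried out identically.
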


\section{Simulations\label{simulations}}
We have conducted a brief simulation study to illustrate Theorem \ref{thm:CLTCT}, the tables of which are given in Appendix B. We have studied the empirical distribution $\mathcal{E}_n$ of $\sqrt{N_n}(\widehat{\mu}_{N_n} - \mu)$, based on 1000 simulations, for both the probabilistic stopping rule (\ref{eq:PSRule}) (Tables 1 and 2) and the deterministic stopping rule (\ref{eq:PSRuleBInfty}) (Table 3), and different values for $\beta$, the true parameter $\mu$, and the number of observations $n$. In each case, we have compared the theoretical upper bound for the total variation distance between the standard normal distribution and the theoretical distribution of $\sqrt{N_n}(\widehat{\mu}_{N_n} - \mu)$, as given in Theorem \ref{thm:CLTCT}, with the Kolmogorov distance between the standard normal cdf and the empirical distribution of  $\sqrt{N_n}(\widehat{\mu}_{N_n} - \mu)$. We have also counted the number of times out of 1000 where the true parameter $\mu$ is contained in the interval $\left[\widehat{\mu}_{N_n} - 1.96/\sqrt{N_n} , \widehat{\mu}_{N_n} + 1.96/\sqrt{N_n} \right]$, which would be a $95 \%$-confidence interval for $\mu$ if $\sqrt{N_n}(\widehat{\mu}_{N_n} - \mu)$ were standard normally distributed.

The predictions by Theorem \ref{thm:CLTCT} are confirmed by the simulation study. More precisely, in the cases where the stopping rule is close to being deterministic and $\mu = 0$, the simulation study indeed points out that the distribution of $\sqrt{N_n}(\widehat{\mu}_{N_n} - \mu)$ deviates from a standard normal distribution (red values in the tables). However, it is also confirmed that for the construction of confidence intervals for $\mu$, it is `harmless' to nevertheless assume that $\sqrt{N_n}(\widehat{\mu}_{N_n} - \mu)$ is standard normally distributed.

\section{Discussion}\label{sec:Discussion}
While sequential designs are in common use in medical and other applications, and while the hypothesis testing theory based there upon has been well established for a long time, there is more confusion about parameter and precision estimation following such a sequential study.  \cite{MKA14}, \cite{MMA16}, and \cite{MMA15} showed that the sample average is a valid estimator, with both stochastic and deterministic stopping rules, for a wide class of normal and exponential-family-based models. They established that this estimator, in spite of small-sample bias and the fact that there is no uniform minimum-variance unbiased estimator, is consistent and hence asymptotically unbiased. 

Building upon this work, in this paper, we have shown that the sample average in the case of normally distributed outcomes is also asymptotically normal in a broad range of situations. First, this is true with stochastic stopping rule. Second, it applies in almost all deterministic stopping rule situations within the class considered, except in the very specific case where the normal population mean $\mu=0$. Note that the special status of the null value stems from the fact that the cut-off  between stopping and continuing associated with our deterministic stopping rule is equal to zero. It can easily be shown, should the cut-off point be shifted to a non-zero value, that then the problematic value for $\mu$ also shifts.

We also showed that the Kolmogorov distance, for $\mu=0$, equals 
$$K(\Phi,\sqrt{N_n}\left(\widehat{\mu}_{N_n} - \mu\right)) = 1/8,$$ 
from which it follows that $\left(\sqrt{N_n}\left(\widehat{\mu}_{N_n} - \mu\right)\right)_n$ does not converge weakly to $\Phi$ in this case. It is enlightening that the qualitative non-convergence result is supplemented with a quantitative determination of the deviation from normality. 

To further examine the extent of the result obtained, simulations show that, indeed, asymptotic normality becomes more problematic when $\mu$ approaches zero and the parameter $\beta$ approaches $+\infty$, with the latter value corresponding to a deterministic rule. However, asymptotic normality is invoked predominantly to calculate normally based confidence intervals. It is therefore very reassuring that using such intervals for $\mu=0$ and a deterministic stopping rule does not lead to any noticeable effect on the coverage probabilities. 

In summary, we can conclude that for relevant classes of stopping rules, the sample average and corresponding normal confidence interval can be used without problem. It will be of interest to examine in more detail the situation of outcomes that follow an exponential family distribution, other than the normal one.

\newpage
\appendix

\section{Proof of Theorem \ref{thm:CLTCT}}\label{sec:PCLTCT}

Before writing down the proof of Theorem \ref{thm:CLTCT}, we give three lemmas. Part of Lemma \ref{lem:jd} can be found in \cite{MKA14}, but as it belongs to the heart of our calculations, we present a complete proof here. 

\begin{lem}\label{lem:lem1}
For $A,B \in \mathbb{R}$,
\begin{equation}
\int_{-\infty}^\infty \phi(x) \Phi(A + B x) dx = \Phi\left(\frac{A}{\sqrt{1 + B^2}}\right).\label{eq:GInt1}
\end{equation}
\end{lem}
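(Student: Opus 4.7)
The plan is to give the classical probabilistic proof of this Gaussian identity. The key idea is to recognize the integral as an expectation with respect to a standard normal, and then to rewrite this expectation as a tail probability for a suitable linear combination of independent normals.

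Concretely, I would let $X$ and $Y$ be independent $N(0,1)$ random variables. The left-hand side of \eqref{eq:GInt1} is exactly $\mathbb{E}[\Phi(A+BX)]$, and since $\Phi(A+Bx) = \mathbb{P}[Y \leq A + Bx]$, conditioning on $X$ yields
\begin{equation*}
\int_{-\infty}^\infty \phi(x)\Phi(A+Bx)\,dx \;=\; \mathbb{E}\bigl[\mathbb{P}[Y \leq A+BX \mid X]\bigr] \;=\; \mathbb{P}[Y - BX \leq A].
\end{equation*}
The random variable $Y - BX$ is a linear combination of independent standard normals, hence normally distributed with mean $0$ and variance $1 + B^2$. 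Therefore $(Y-BX)/\sqrt{1+B^2}$ is standard normal, and
\begin{equation*}
\mathbb{P}[Y - BX \leq A] \;=\; \mathbb{P}\!\left[\frac{Y-BX}{\sqrt{1+B^2}} \leq \frac{A}{\sqrt{1+B^2}}\right] \;=\; \Phi\!\left(\frac{A}{\sqrt{1+B^2}}\right),
\end{equation*}
which gives the claim.

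An equivalent route, if one prefers to stay at the level of integrals, is to write $\Phi(A+Bx) = \int_{-\infty}^{A+Bx}\phi(y)\,dy$, apply Fubini's theorem to the resulting double integral $\iint_{\{y \leq A+Bx\}} \phi(x)\phi(y)\,dx\,dy$, and then perform the orthogonal change of variables that diagonalizes the quadratic form in the exponent; this reproduces the same computation of $\mathbb{P}[Y - BX \leq A]$ with the variance $1+B^2$. There is no real obstacle in the proof; the only thing to watch is that the formula covers the degenerate case $B=0$ automatically, in which case both sides reduce to $\Phi(A)$.
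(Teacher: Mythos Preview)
Your argument is correct; it is exactly the standard probabilistic derivation of this Gaussian identity. The paper itself does not spell out a proof (it merely writes ``This is standard''), so your proposal in fact supplies the details that the paper omits.
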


\begin{proof}
This is standard.
\end{proof}

\begin{lem}\label{lem:lem2}
For $k,z \in \mathbb{R}$,
\begin{equation}
\phi\left(\frac{z - n \mu}{\sqrt{n}}\right)  \phi\left(\frac{k - z - n \mu}{\sqrt{n}}\right) = \phi\left(\frac{k - 2 n \mu}{\sqrt{2 n}}\right) \phi\left(\frac{2 z - k}{\sqrt{2 n}}\right).\label{eq:2Gpdf}
\end{equation}
\end{lem}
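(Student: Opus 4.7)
The statement is a deterministic identity between products of normal densities, so the natural plan is to reduce it to an equality of quadratic forms in the exponents. Writing $\phi(x) = (2\pi)^{-1/2} e^{-x^{2}/2}$, the prefactor $(2\pi)^{-1}$ is common to both sides, so it suffices to prove
\[
\frac{(z - n\mu)^{2} + (k - z - n\mu)^{2}}{n} = \frac{(k - 2 n\mu)^{2} + (2z - k)^{2}}{2n}.
\]

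The cleanest way to verify this is a linear change of variables. First I would set $u = z - n\mu$ and $v = k - z - n\mu$, so that $u + v = k - 2 n\mu$ and $u - v = 2z - k$. The classical polarization identity $2(u^{2} + v^{2}) = (u + v)^{2} + (u - v)^{2}$ then gives $(z - n\mu)^{2} + (k - z - n\mu)^{2} = \tfrac{1}{2}\bigl((k-2n\mu)^{2} + (2z-k)^{2}\bigr)$, and dividing by $n$ yields exactly the displayed identity. Substituting this back into the exponents and reattaching the $(2\pi)^{-1}$ factor closes the argument.

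Conceptually, this is the analytic shadow of the orthogonal decomposition of the pair $(K_{n},\, K_{2n}-K_{n})$ into its sum $K_{2n}$ and its difference $2K_{n}-K_{2n}$; since $K_{n}$ and $K_{2n}-K_{n}$ are independent $N(n\mu,n)$ variables under the model, this rotation produces two independent coordinates with variances $2n$, which is precisely what the right hand side reflects. Because the lemma is a pure polynomial identity, I do not anticipate any real obstacle: the only thing to watch is bookkeeping of the $\mu$-linear cross terms, which cancel automatically once one groups $(u+v)$ with $k - 2n\mu$ and $(u-v)$ with $2z - k$.
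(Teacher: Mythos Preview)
Your proposal is correct; the substitution $u=z-n\mu$, $v=k-z-n\mu$ together with the parallelogram identity $2(u^{2}+v^{2})=(u+v)^{2}+(u-v)^{2}$ is exactly the ``straightforward calculation'' the paper invokes without spelling out. There is nothing further to compare, since the paper gives no details beyond that phrase.
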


\begin{proof}
This follows by a straightforward calculation.
\end{proof}

\begin{lem}\label{lem:jd}
Let $f_{N_n,K_{N_n}}$ be the joint density of $N_n$ and $K_{N_n}$. Then, for the probabilistic stopping rule (\ref{eq:PSRule}), 
\begin{equation}
f_{N_n,K_{N_n}}(n,k) = \frac{1}{\sqrt{n}} \phi\left(\frac{k - n \mu}{\sqrt{n}}\right) \Phi\left(\alpha + \frac{\beta k}{n} \right)\label{eq:jdnps}
\end{equation}
and
\begin{equation}
f_{N_n,K_{N_n}}(2n, k) = \frac{1}{\sqrt{2n}} \phi\left(\frac{k - 2n\mu}{\sqrt{2n}}\right) \left[1 - \Phi\left(\frac{\alpha + \frac{\beta k }{2n}}{\sqrt{\frac{2n + \beta^2}{2n}}}\right)\right],\label{eq:jd2nps}
\end{equation}
and, for the deterministic stopping rule (\ref{eq:PSRuleBInfty}),
\begin{equation}
f_{N_n,K_{N_n}}(n,k) = \frac{1}{\sqrt{n}} \phi\left(\frac{k - n \mu}{\sqrt{n}}\right) 1_{\{k > 0\}}\label{eq:jdnds}
\end{equation}
and
\begin{equation}
f_{N_n,K_{N_n}}(2n, k) = \frac{1}{\sqrt{2n}} \phi\left(\frac{k - 2n\mu}{\sqrt{2n}}\right) \left[1 - \Phi\left(\frac{k}{\sqrt{2n}}\right)\right].\label{eq:jd2nds}
\end{equation}
\end{lem}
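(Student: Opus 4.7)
The plan is to treat the two values $N_n \in \{n, 2n\}$ separately, in each case first reducing to a computation involving only $K_n$ (and its independent twin $K_{2n} - K_n$) and then applying Lemmas \ref{lem:lem1} and \ref{lem:lem2} to carry out the integrals in closed form.

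First, for the case $N_n = n$. Since $K_{N_n} = K_n$ on $\{N_n = n\}$, and the conditional stopping probability in (\ref{eq:PSRule}) depends on $X_1, \ldots, X_n$ only through $K_n$, the tower property yields $\mathbb{P}[N_n = n \mid K_n = k] = \Phi(\alpha + \beta k/n)$. Multiplying this by the marginal density $\frac{1}{\sqrt n}\phi\!\left(\frac{k-n\mu}{\sqrt n}\right)$ of $K_n$ gives (\ref{eq:jdnps}); replacing $\Phi(\alpha + \beta k/n)$ by $1_{\{k>0\}}$ gives (\ref{eq:jdnds}).

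The substantive work is in the case $N_n = 2n$. Here I would decompose $K_{2n} = K_n + (K_{2n}-K_n)$, use the independence of $K_n$ and $K_{2n}-K_n$ (both $N(n\mu,n)$), and write
\begin{equation*}
f_{N_n,K_{N_n}}(2n,k) = \int_{-\infty}^\infty \frac{1}{\sqrt n}\phi\!\left(\frac{z-n\mu}{\sqrt n}\right) \frac{1}{\sqrt n}\phi\!\left(\frac{k-z-n\mu}{\sqrt n}\right)\!\left[1-\Phi\!\left(\alpha+\frac{\beta z}{n}\right)\right]dz,
\end{equation*}
(and analogously with $1_{\{z\le 0\}}$ in place of the bracket in the deterministic case). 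Lemma \ref{lem:lem2} is tailor-made to factor the product of the two $\phi$'s so that the first factor $\phi\!\left(\frac{k-2n\mu}{\sqrt{2n}}\right)$ comes outside the integral, leaving only $\phi\!\left(\frac{2z-k}{\sqrt{2n}}\right)$ times the bracket under the integral sign.

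After the substitution $u = (2z-k)/\sqrt{2n}$, the remaining integrand becomes $\phi(u)\!\left[1-\Phi\!\left(\alpha+\frac{\beta k}{2n}+\frac{\beta}{\sqrt{2n}}u\right)\right]$, and Lemma \ref{lem:lem1} with $A=\alpha+\beta k/(2n)$, $B=\beta/\sqrt{2n}$ produces exactly the argument $(\alpha+\beta k/(2n))/\sqrt{(2n+\beta^2)/(2n)}$ appearing in (\ref{eq:jd2nps}). For the deterministic rule the same substitution turns the indicator $1_{\{z\le 0\}}$ into $1_{\{u\le -k/\sqrt{2n}\}}$, so the remaining integral is simply $\Phi(-k/\sqrt{2n}) = 1-\Phi(k/\sqrt{2n})$, giving (\ref{eq:jd2nds}).

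The main obstacle, such as it is, is purely bookkeeping: tracking the Jacobian $\sqrt{2n}/2$ from the substitution and verifying that it combines with the prefactor $1/n$ to produce the advertised $1/\sqrt{2n}$, and handling the $\beta z/n$ term carefully so that the shift $\alpha+\beta k/(2n)$ is split from the $u$-dependent part in exactly the form required by Lemma \ref{lem:lem1}. No conceptual difficulty remains once the decomposition $K_{2n} = K_n + (K_{2n}-K_n)$ is in place, and the deterministic assertions (\ref{eq:jdnds}), (\ref{eq:jd2nds}) can be obtained either by the same route or by taking $\beta \to \infty$ in the probabilistic formulas.
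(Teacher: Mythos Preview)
Your proposal is correct and follows essentially the same route as the paper: both treat $N_n=n$ by multiplying the marginal density of $K_n$ by the conditional stopping probability, and both handle $N_n=2n$ by writing the density as an integral over $K_n=z$, factoring the product of normals via Lemma~\ref{lem:lem2}, substituting $u=(2z-k)/\sqrt{2n}$, and invoking Lemma~\ref{lem:lem1}. The only cosmetic difference is that the paper first writes $f_{N_n,K_{2n}}(2n,k)=f_{K_{2n}}(k)-f_{N_n,K_{2n}}(n,k)$ and computes the subtracted convolution with $\Phi(\alpha+\beta z/n)$ inside, whereas you keep $[1-\Phi(\alpha+\beta z/n)]$ intact under one integral; these are the same computation up to the identity $\int\phi\cdot 1=f_{K_{2n}}$.
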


\begin{proof}
First suppose that the probabilistic stopping rule (\ref{eq:PSRule}) is followed. 

Notice that
\begin{equation}
f_{N_n,K_{N_n}}(n,k) = f_{N_n,K_{n}}(n,k) = f_{K_n}(k) f_{N_n \mid K_n}(n\mid k),\label{eq:Bayes1}
\end{equation}
with $f_{K_n}$ the density of $K_n$, and $f_{N_n \mid K_n}$ the conditional density of $N_n$ given $K_n$. Now, the $X_i$ being independent and normally distributed with mean $\mu$ and variance $1$, we have
\begin{equation}
f_{K_n}(k) = \frac{1}{\sqrt{n}} \phi\left(\frac{k - n  \mu}{\sqrt{n}}\right).\label{eq:densityKn}
\end{equation}
Furthermore, by (\ref{eq:PSRule}), 
\begin{equation}
f_{N_n \mid K_n}(n\mid k) = \Phi\left(\alpha + \frac{\beta k}{n}\right).\label{eq:condensNnKn}
\end{equation}
Combining (\ref{eq:Bayes1}), (\ref{eq:densityKn}), and (\ref{eq:condensNnKn}), establishes (\ref{eq:jdnps}). 

We now establish (\ref{eq:jd2nps}). Observe that
\begin{eqnarray}
f_{N_n,K_n}(2n,k)
&=& f_{N_n,K_{2n}}(2n,k)\nonumber\\
&=& f_{K_{2n}}(k) - f_{N_n,K_{2n}}(n,k)\nonumber\\
&=& f_{K_{2n}}(k) - \left(f_{N_n,K_n}(n,\cdot) \star f_{\sum_{i = n+1}^{2n} X_i}\right)(k),\label{eq:Bayes2}
\end{eqnarray}
$\star$ being the convolution product, and the last equality following by indepence of $N_n$ and $X_{n+1}, \ldots, X_{2n}$. Using (\ref{eq:jdnps}) and the fact that the $X_i$ are independent and normally distributed with mean $\mu$ and variance $1$, (\ref{eq:Bayes2}) equals
\begin{equation*}
\frac{1}{\sqrt{2n}}\phi\left(\frac{k - 2n \mu}{\sqrt{2n}}\right) - \frac{1}{n} \int_{-\infty}^\infty \phi\left(\frac{z - n \mu}{\sqrt{n}}\right) \Phi\left(\alpha +  \frac{\beta z}{n}\right) \phi\left(\frac{k - z - n \mu}{\sqrt{n}} \right) dz,
\end{equation*}
which, by (\ref{eq:2Gpdf}), 
\begin{equation}
= \frac{1}{\sqrt{2n}} \phi\left(\frac{k - 2n \mu}{\sqrt{2n}}\right) - \frac{1}{n} \phi\left(\frac{k - 2 n \mu}{\sqrt{2 n}}\right) \int_{-\infty}^\infty \phi\left(\frac{2 z - k}{\sqrt{2n}}\right) \Phi\left(\alpha + \frac{\beta z}{n}\right) dz.\label{eq:Bayes3}
\end{equation}
After performing the change of variables $u = \frac{2 z - k}{\sqrt{2n}}$, (\ref{eq:Bayes3}) reduces to
\begin{equation*}
\frac{1}{\sqrt{2n}} \phi\left(\frac{k - 2n \mu}{\sqrt{2n}}\right) \left[1 - \int_{- \infty}^\infty  \phi(u) \Phi\left(\alpha + \frac{\beta k}{2n} + \frac{\beta}{\sqrt{2n}} u \right) du\right],\label{eq:Bayes4}
\end{equation*}
which, by (\ref{eq:GInt1}), coincides with
\begin{equation*}
\frac{1}{\sqrt{2n}} \phi\left(\frac{k - 2n \mu}{\sqrt{2n}}\right) \left[1 - \Phi\left(\frac{\alpha + \frac{\beta k}{2n}}{\sqrt{1 + \frac{\beta^2}{2n}}}\right)\right].
\end{equation*}
This proves that (\ref{eq:jd2nps}) holds.

Now suppose that the detereministic stopping rule (\ref{eq:PSRuleBInfty}) is followed. 

Of course, (\ref{eq:Bayes1}) and (\ref{eq:densityKn}) continue to hold, and, by (\ref{eq:PSRuleBInfty}),
$$f_{N_n\mid K_n}(n\mid k) = 1_{\left\{k > 0\right\}},$$
from which (\ref{eq:jdnds}) follows. 

To establish (\ref{eq:jd2nds}), notice that (\ref{eq:Bayes3}) also continues to hold, which, by (\ref{eq:jdnds}), gives
\begin{equation*}
f_{N_n,K_n}(2n,k) = \frac{1}{\sqrt{2n}}\phi\left(\frac{k - 2n \mu}{\sqrt{2 n}}\right) - \frac{1}{n} \int_0^\infty \phi\left(\frac{z - n \mu}{\sqrt{n}}\right) \phi\left(\frac{k - z - n \mu}{\sqrt{n}}\right) dz,
\end{equation*}
which, by (\ref{eq:2Gpdf}),
\begin{equation*}
= \frac{1}{\sqrt{2n}} \phi\left(\frac{k - 2 n \mu}{\sqrt{2n}}\right) - \frac{1}{n} \phi\left(\frac{k - 2 n \mu}{\sqrt{2n}}\right) \int_0^\infty \phi\left(\frac{2 z - k}{\sqrt{2 n}}\right) dz,
\end{equation*}
which, after performing the change of variables $u = \frac{2 z - k}{\sqrt{2n}}$, 
\begin{eqnarray*}
&=& \frac{1}{\sqrt{2 n}} \phi\left(\frac{k - 2 n \mu}{\sqrt{ 2n}}\right) \left[1 - \int_{- k/\sqrt{2n}}^\infty \phi(u) du\right]\\
&=& \frac{1}{\sqrt{2 n}} \phi\left(\frac{k - 2 n \mu}{\sqrt{ 2n}}\right) \left[1 - \Phi\left(\frac{k}{\sqrt{2n}}\right)\right].
\end{eqnarray*}
This finishes the proof of (\ref{eq:jd2nds}).
\end{proof}

\begin{proof}[Proof of Theorem \ref{thm:CLTCT}]
First suppose that the probabilistic stopping rule (\ref{eq:PSRule}) is followed. 

For $n$ and a Borel set $A \subset \mathbb{R}$,
\begin{eqnarray}
\lefteqn{\mathbb{P}\left[\sqrt{N_n} \left(\widehat{\mu}_{N_n} - \mu\right) \in A\right]}\nonumber\\
&=& \mathbb{P}\left[\frac{K_{N_n} - N_n \mu}{\sqrt{N_n}} \in A\right]\nonumber\\
&=& \mathbb{P}\left[\frac{K_{n} - n \mu}{\sqrt{n}} \in A, N_n = n\right] + \mathbb{P}\left[\frac{K_{2n} - 2n \mu}{\sqrt{2n}} \in A, N_n = 2n\right].\label{eq:SplitUpn2n} 
\end{eqnarray}
Plugging in (\ref{eq:jdnps}) and (\ref{eq:jd2nps}) in (\ref{eq:SplitUpn2n}), gives
\begin{equation}
\mathbb{P}\left[\sqrt{N_n} \left(\widehat{\mu}_{N_n} - \mu\right) \in A\right] = I_1 + I_2,\label{eq:EhI1I2}
\end{equation}
with
\begin{equation*}
I_1 = \int 1_A\left(\frac{k - n \mu}{\sqrt{n}}\right)  \frac{1}{\sqrt{n}} \phi\left(\frac{k - n \mu}{\sqrt{n}}\right) \Phi\left(\alpha + \frac{\beta k }{n}\right) dk
\end{equation*}
and
\begin{equation*}
I_2 = \int 1_A\left(\frac{k - 2 n \mu}{\sqrt{2 n}}\right) \frac{1}{\sqrt{2 n}} \phi\left(\frac{k - 2 n \mu}{\sqrt{2 n}}\right) \left[1 - \Phi\left(\frac{\alpha + \frac{\beta k}{2n}}{\sqrt{\frac{2n + \beta^2}{2n}}}\right)\right] dk.
\end{equation*}
Performing the change of variables $u = \frac{k - n \mu}{\sqrt{n}}$ shows that 
\begin{equation}
I_1 = \int_{A}  \phi(u) \Phi\left(\alpha + \beta \mu + \frac{\beta}{\sqrt{n}} u\right) du,\label{eq:I1el}
\end{equation}
and performing the change of variables $u = \frac{k - 2 n \mu}{\sqrt{2n}}$ gives
\begin{equation}
I_2 = \int_{A} \phi(u) \left[1 - \Phi\left(\sqrt{\frac{2n}{2n+\beta^2}} (\alpha + \beta \mu) + \frac{\beta}{\sqrt{2 n + \beta^2}} u\right)\right] du.\label{eq:I2el}
\end{equation}
Combining (\ref{eq:EhI1I2}), (\ref{eq:I1el}), and (\ref{eq:I2el}), yields (\ref{eq:ROCTV}).

Now suppose that the deterministic stopping rule (\ref{eq:PSRuleBInfty}) is followed. Fix $n$ and a Borel set $A \subset \mathbb{R}$. Of course, (\ref{eq:SplitUpn2n}) continues to hold. Plugging in (\ref{eq:jdnds}) and (\ref{eq:jd2nds}) in (\ref{eq:SplitUpn2n}) gives
\begin{equation}
\mathbb{P}\left[\sqrt{N_n}\left(\widehat{\mu}_{N_n} - \mu\right) \in A\right] = L_1 + L_2,\label{eq:SplitUpL1L2}
\end{equation}
with
\begin{equation*}
L_1 = \int_{-\infty}^\infty 1_A\left(\frac{k - n \mu}{\sqrt{n}}\right) \frac{1}{\sqrt{n}} \phi\left(\frac{k - n \mu}{\sqrt{n}}\right) 1_{\{k > 0\}} dk
\end{equation*}
and
\begin{equation*}
L_2 = \int_{-\infty}^\infty 1_A\left(\frac{k - 2 n \mu}{\sqrt{2n}}\right) \frac{1}{\sqrt{2n}} \phi\left(\frac{k - 2 n \mu}{\sqrt{2n}}\right) \left[1 - \Phi\left(\frac{k}{\sqrt{2n}}\right)\right] dk.
\end{equation*}
Performing the change of variables $u = \frac{k - n \mu}{\sqrt{n}}$ leads to
\begin{equation}
L_1 = \int_A \phi(u) 1_{\{u > - \sqrt{n} \mu\}}du,\label{eq:L1el}
\end{equation}
and performing the change of variables $u = \frac{k - 2 n \mu}{\sqrt{2n}}$ yields
\begin{equation}
L_2 = \int_{A} \phi(u) \left[1 - \Phi\left(u + \sqrt{2n} \mu\right)\right] du.\label{eq:L2el}
\end{equation}
Now (\ref{eq:SplitUpL1L2}), (\ref{eq:L1el}), and (\ref{eq:L2el}) give (\ref{eq:ROCTVD}).

We now turn to the case $\mu = 0$. Replacing $A$ by $\left]-\infty,x\right]$ in (\ref{eq:SplitUpL1L2}), (\ref{eq:L1el}), and (\ref{eq:L2el}), shows that, for $x \geq 0$,
\begin{eqnarray*}
\left|\Phi(x) - \mathbb{P}[\sqrt{N_n} (\widehat{\mu}_{N_n} - \mu) \leq x]\right| &=&  \left|\int_{-\infty}^x \phi(u) \Phi(u) du - \int_0^x \phi(u) du\right|\\
&=& \left|\left[\Phi(x)\right]^2/2 - \Phi(x) + 1/2\right|,
\end{eqnarray*}
which assumes the maximal value $1/8$ on $[0,\infty[$, and, for $x \leq 0$,
\begin{eqnarray*}
\left|\Phi(x) - \mathbb{P}[\sqrt{N_n} (\widehat{\mu}_{N_n} - \mu) \leq x]\right| &=& \int_{-\infty}^x \phi(u) \Phi(u) du\\
&=& \left[\Phi(x)\right]^2/2,
\end{eqnarray*}
which assumes the maximal value $1/8$ on $]-\infty,0]$. This proves (\ref{eq:KBad}). Finally, replacing $A$ by $\left[-x,x\right]$ in (\ref{eq:SplitUpL1L2}), (\ref{eq:L1el}), and (\ref{eq:L2el}), shows that, for $x \geq 0$,
\begin{eqnarray*}
\lefteqn{\left|2\Phi(x) - 1 - \mathbb{P}[-x \leq \sqrt{N_n} (\widehat{\mu}_{N_n} - \mu) \leq x]\right|}\\
&=&  \left|\int_{-x}^x \phi(u) \Phi(u) du - \int_0^x \phi(u) du\right|\\
&=& \left|\left[\Phi(x)\right]^2/2 - \left[\Phi(-x)\right]^2/2 - \Phi(x) + 1/2\right|\\
&=& 0,
\end{eqnarray*}
which proves (\ref{eq:CImuzero}).
\end{proof}

\newpage
\section{Tables from the simulation study}

\begin{table}[H]\label{t:betasmall}
\caption{\small{Simulation study for estimation (\ref{eq:est}) for the probabilistic stopping rule (\ref{eq:PSRule}); $\alpha =0$; $\beta$ small; $\mu$, true mean for the standard normal from which the sample is taken; $n$, number of observations; $C = C(\alpha,\beta,\mu,n)$ in Theorem \ref{thm:CLTCT}; $K$, the Kolmogorov distance between the standard normal cdf and the empirical cdf of $\sqrt{N_n}(\widehat{\mu}_{N_n} - \mu)$} based on 1000 simulations; $L$, number of times out of 1000 where the true parameter $\mu$ is contained in the interval $\left[\widehat{\mu}_{N_n} - 1.96/\sqrt{N_n} , \widehat{\mu}_{N_n} + 1.96/\sqrt{N_n} \right]$ (which would be a $95 \%$-confidence interval if $\sqrt{N_n}(\widehat{\mu}_n - \mu)$ were standard normally distributed).}
\begin{center}
    \begin{tabular}{ | r | r | r | r | r | r ||  r | r | r | r | r | r |}
    
    \hline
    
    $\beta$ & $\mu$ & $n$ & $C$ & $K$ & $L$ & $\beta$ & $\mu$ & $n$ & $C$ & $K$ & $L$ \\ 
    
    \hline
   
    0 & -10 & 10 & 0.000&0.024 & 947 &1 & -10 & 10 &0.000& 0.015 &958\\
    0 & -10 & 100 & 0.000&0.018 & 948&1 & -10 & 100 &0.000& 0.018&947 \\
    0 & -10 & 1000 &0.000& 0.021 &941& 1 & -10 & 1000 &0.000& 0.029 &949
\\

    0 & -1 & 10 &0.000 &0.026 &947 &1 & -1 & 10 & 0.081& 0.024&960 \\

    0 & -1 & 100 &0.000 &0.030 &  948&1 & -1 & 100 & 0.025&0.047&954 \\

    0 & -1 & 1000 &0.000 &0.021 & 952&    1 & -1 & 1000 &0.008  &0.014& 941\\

    0 & 0& 10 & 0.000&0.021  &941 &1 & 0& 10 &0.120 & 0.042&950 \\

    0 & 0& 100 & 0.000&0.030& 953 &1 & 0& 100 &0.039& 0.017&952 \\

    0 & 0 & 1000 &0.000& 0.011& 958 &   1 & 0 & 1000 &0.013& 0.012&954\\

    0 & 1 & 10 &  0.000&0.027 & 954 & 1 & 1 & 10 &0.071& 0.026&  957\\

    0 & 1 & 100 & 0.000&0.017 &957&     1 & 1 & 100 & 0.023&0.016&  955\\

    0 & 1 & 1000 & 0.000&0.045 &957&    1 & 1 & 1000 &0.008& 0.036& 941\\
    0 & 10 & 10 &0.000&0.039   & 950  &1 & 10 & 10 &0.000& 0.037&951 \\

    0 & 10& 100 & 0.000&0.026 & 943& 1 & 10& 100 &0.000& 0.026& 952\\

    0 & 10 & 1000 &0.000& 0.024 &956& 1 & 10 & 1000 & 0.000 &0.028& 949\\
	\hline
   \end{tabular}
   
   \end{center}
   
   \end{table}
   \begin{table}[H]
   \caption{\small Same setup as in Table 1. Now $\beta$ is moderately large. }

      \begin{center}

   \begin{tabular}{  | r | r | r | r | r | r ||  r | r | r | r | r | r |}
    \hline
    
     $\beta$ & $\mu$ & $n$ & $C$ & $K$ & $L$ &  $\beta$ & $\mu$ & $n$ & $C$ & $K$ & $L$\\ 
    
    \hline
   
    10 & -10 & 10 &0.000& 0.010&949&100 & -10 & 10 &0.000& 0.019 &948\\

    10 & -10 & 100 &0.000& 0.020&955&100 & -10 & 100 &0.000& 0.024& 944\\

    10 & -10 & 1000 &0.000& 0.024 &952& 100 & -10 & 1000 &0.000& 0.023&946\\

    10 & -1 & 10 &0.002& 0.015 &953&100 & -1 & 10 &0.001& 0.039&960\\
    10 & -1 & 100 &0.000& 0.017 &955&100 & -1 & 100 &0.000 & 0.015& 953\\

    10 & -1 & 1000 &0.000& 0.017& 947&  100 & -1 & 1000 &0.000& 0.011&941\\

    10 & 0& 10 &0.440& \color{red} 0.084 &945&100 & 0& 10 &0.494& \color{red} 0.145&950\\

    10 & 0& 100 &0.300& 0.021&948  & 100 & 0& 100 &0.481& \color{red} 0.080&948\\
    10 & 0 & 1000 &0.120& 0.047&950&100 & 0 & 1000 &0.437& \color{red} 0.068&946\\

    10 & 1 & 10 &0.001& 0.028&942&100 & 1 & 10 &0.001& 0.035&946\\
    10 & 1 & 100 &0.000& 0.026 &973&100 & 1 & 100 &0.000 &  0.011&938\\

    10 & 1 & 1000 &0.000& 0.019 &940& 100 & 1 & 1000 &0.000&  0.021& 951\\

    10 & 10 & 10 &0,000& 0.021&  967& 100 & 10 & 10 &0.000&  0.009&954\\

    10 & 10& 100 &0.000& 0.009&  955&  100 & 10& 100 &0,000&  0.014&960\\

    10 & 10 & 1000 &0.000& 0.033&948 &  100 & 10 & 1000 &0.000& 0.010&954\\

   \hline
\end{tabular}
\end{center}

\end{table}
\begin{table}[H]
\caption{\small{Simulation study for estimation (\ref{eq:est}) for the deterministic stopping rule (\ref{eq:PSRuleBInfty}); $\mu$, true mean for the standard normal from which the sample is taken; $n$, number of observations; $C = C(\mu,n)$ in Theorem \ref{thm:CLTCT}; $K$, the Kolmogorov distance between the standard normal cdf and the empirical cdf of $\sqrt{N_n}(\widehat{\mu}_{N_n} - \mu)$} based on 1000 simulations; $L$, number of times out of 1000 where the true parameter $\mu$ is contained in the interval $\left[\widehat{\mu}_{N_n} - 1.96/\sqrt{N_n} , \widehat{\mu}_{N_n} + 1.96/\sqrt{N_n} \right]$ (which would be a $95 \%$-confidence interval if $\sqrt{N_n}(\widehat{\mu}_n - \mu)$ were standard normally distributed).}
\begin{center}
\begin{tabular}{|r | r | r | r | r |r|}
    \hline
    
    $\beta$ & $\mu$ & $n$ & $C$ & $K$& $L$ \\ 
    
    \hline
   
    $\infty$ & -10 & 10 &0.000& 0.005&943 \\
    $\infty$ & -10 & 100 &0.000& 0.026& 949\\
    $\infty$ & -10 & 1000 &0.000& 0.029&949\\
    $\infty$ & -1 & 10 &0.002 & 0.034&952\\
    $\infty$ & -1 & 100 &0.000& 0.025&956\\
    $\infty$ & -1 & 1000 &0.000& 0.018&959\\
    $\infty$ & 0& 10 &0.250& \color{red} 0.130&940\\
    $\infty$ & 0& 100 &0.250& \color{red} 0.113&941\\
    $\infty$ & 0 & 1000 &0.250& \color{red} 0.129&958\\
    $\infty$ & 1 & 10 &0.002& 0.022&955\\
    $\infty$ & 1 & 100 &0.000& 0.034&968\\
    $\infty$ & 1 & 1000 &0,000&  0.005&964\\
    $\infty$ & 10 & 10 &0.000& 0.016&946 \\
    $\infty$ & 10& 100 &0,000&  0.021&953\\
    $\infty$ & 10 & 1000 &0,000&  0.023&952\\

   \hline
   \end{tabular} 
   
   \end{center}
   
   \end{table}

\end{document}